\documentclass[12pt]{amsart}

% Packages are ways to add additional features to LaTeX
% The following is a long list of packages we tend to include
% even though we are not sure what they do!  As a general rule,
% if you add a package, include a brief note about what it does.

\usepackage[utf8]{inputenc}
\usepackage{amsfonts}
\usepackage{amsthm}
\usepackage{amssymb}
\usepackage{amsmath}
\usepackage{amscd}
\usepackage{latexsym,dsfont}
\usepackage{bbm}
\usepackage{mathrsfs}
\usepackage{xcolor}

% The following packages control the choice of font and adjust spacing and the margins of the document.  These can be optimized for various things.

\usepackage{times}
\usepackage{microtype}
\usepackage{setspace}
\usepackage[margin=1.2in]{geometry}

% The following package improves the functionality of the \cite command

\usepackage{cite}

% The following package lets all the equation and theorem references and citations be typeset as hyperlinks, so you can easily navigate through a PDF document.

\usepackage[colorlinks=true, pdfstartview=FitV, linkcolor=blue,
citecolor=blue, urlcolor=blue]{hyperref}

% The following commands define theorem type environments.
% These should be used for all theorems, lemmas, etc.

\newtheorem{theorem}{Theorem}[section]
\newtheorem{lemma}[theorem]{Lemma}

\newtheorem{defn}[theorem]{Definition}

\newtheorem{hyp}[theorem]{Hypothesis}

\theoremstyle{definition}

\theoremstyle{remark}
\newtheorem{remark}[theorem]{Remark}

% The following commands control equation numbering and allow page breaks in the middle of very long, multline equations.

\numberwithin{equation}{section}
 \allowdisplaybreaks

 % The following commands are user defined commands to simplify writing documents by creating short macros for commonly used commands.  There are usually a lot of these, and on a jointly written document, different authors will include their personal favorites.  As long as the command names do not conflict, this is fine to do.

 % commands to create average integrals with horizontal bar

\def\XXint#1#2#3{{\setbox0=\hbox{$#1{#2#3}{\int}$}
     \vcenter{\hbox{$#2#3$}}\kern-.5\wd0}}

%%% Useful operators

\DeclareMathOperator{\Div}{div}
\DeclareMathOperator{\supp}{supp}

\DeclareMathOperator{\grad}{\nabla}

\DeclareMathOperator{\lip}{\mathrm{Lip}}

% Symbols for sets of numbers

\newcommand{\N}{\mathbb N}

\newcommand{\R}{\mathbb{R}}

% additional commands for notation

\newcommand{\bH}{\mathbf H}
\newcommand{\bG}{\mathbf G}
\newcommand{\bR}{\mathbf R}
\newcommand{\bS}{\mathbf S}
\newcommand{\bT}{\mathbf T}

\newcommand{\vecg}{\mathbf g}

% Adds a boxed comment to text

% The following commands establish the header of a paper

\title[Weak maximum principle]
{The weak maximum principle for solutions of degenerate elliptic equations with lower order terms }

% List all the authors here, in alphabetical order
\author[Cruz-Uribe \& Rodney] {  David Cruz-Uribe OFS and   Scott Rodney}

%
%For each authoer, enter name, address and email as shown, in alphabetical order
%

\address{David Cruz-Uribe, OFS \\
Dept. of Mathematics \\
University of Alabama \\
 Tuscaloosa, AL 35487, USA}

\email{dcruzuribe@ua.edu}

\address{Scott Rodney\\
Dept. of Mathematics, Physics and Geology \\ 
Cape Breton University \\
Sydney, NS B1Y3V3, CA} 

\email{scott\_rodney@cbu.ca}

%
%In this section the authors list any funding support they have and give thanks to people who helped with the paper (e.g., the anonymous referees)
%

\thanks{The first author is partially supported by a Simons Foundation
  Travel Support for Mathematicians Grant and by NSF Grant DMS-2349550.
   The second author  is partially supported by an NSERC development grant.  This project is supported by  TUBITAK, the Scientific and Technological Research Council of T\"urkiye through a 2501 Joint Research Program grant 223N112; Yusuf Zeren, Y\i ld\i z Technical University,T\"urkiye, is PI.  The first author would also like to thank the organizers of the XVth International Conference of the Georgian Mathematical Union for the invitation to speak.  This paper builds on results presented at the conference and is in response to a question asked after the talk.}

%Keywords and subject classification describe the paper.  The subjection classification is taken from the AMS website.

\keywords{degenerate elliptic equations, weak maximum principle}
\subjclass{35A01, 35A02, 35H20, 35J15, 35R05, 46E35}

% Kluge to make the Table of Contents work the way I want.
\setcounter{tocdepth}{2}
\makeatletter
\def\l@subsection{\@tocline{2}{0pt}{4pc}{5pc}{}}
\makeatother

\begin{document}
\begin{abstract}
 We prove a weak maximum principle for subsolutions of 
 a degenerate, linear, second order elliptic operator with lower order terms, building on the existence results recently proved by the authors and \c{C}etin, Dal and Zeren.
\end{abstract}

\maketitle

%\tableofcontents

\section{Introduction}
The purpose of this note is to prove a weak maximum principle for subsolutions of a Dirichlet problem for a degenerate elliptic operator $L$ defined on a bounded domain $\Omega \subset \R^n$.   Recently, the authors and \c{C}etin, Dal and Zeren~\cite{Tubitak1} proved the existence and uniqueness of solutions to the nonhomogeneous Dirichlet problem 
\begin{equation} \label{eqn:dirichlet} 
\begin{cases}
Lu = f + \bT' \vecg, & x \in \Omega, \\
 u = h, & x \in \partial \Omega,
 \end{cases}
\end{equation}
where $\Omega$ is a bounded, open set in $\R^n$, and $L$ is a degenerate, second order elliptic operator with lower order terms:
\[  Lu = -v^{-1}\Div(Q\grad u)  + \bH\cdot \bR u + \bS'\bG u + Fu.\]
We will give precise definitions of the coefficients below; here we note that $Q$ is an $n\times n$ self-adjoint, positive semidefinite, measurable matrix function, and $v$ is a nonnegative measurable function.  We assume that $Q$ satisfies a degenerate ellipticity condition.  As part of their proof, the authors proved~\cite[Theorem~5.1]{Tubitak1} that the only solution of the homogeneous Dirichlet problem $Lu=0$ with $h=0$ is the $0$ function.  

Our goal is to show  that their arguments can be generalized to prove the stronger result that sub and supersolutions satisfy a weak maximum principle: that is, we show that if $Lu\leq 0$ on $\Omega$, then
\[ \sup_\Omega u(x) \leq \sup_{\partial \Omega} u(x),  \]
and that if $Lu\geq 0$ on $\Omega$, then
\[ \inf_\Omega u(x) \geq \inf_{\partial \Omega} u(x). \]
If $Lu = -\Delta u$, or more generally, if $Lu= -\Div(Q\grad u)$, where $Q$ is a uniformly elliptic matrix, then this result is classical: see~\cite[Theorem~8.1]{MR1814364}.  Fabes, Kenig and Serapioni~\cite[Theorem~2.2.2]{MR643158} proved a weak maximum principle in the case of a degenerate equation where $Q=w\tilde{Q}$ for $\tilde{Q}$ uniformly elliptic and $w$ satisfying the Muckenhoupt $A_2$ condition.  Other, stronger maximum principles have been proved for a variety of degenerate operators with various assumptions: see, for instance,~\cite{MR2015404,MR4224718,CUMR2025}.  We note that these results require some variation of Moser or De Giorgi iteration, whereas our proof is much more elementary.

The remainder of this paper is organized as follows.  In Section~\ref{section:spaces} we define the spaces wherein we define our degenerate weak solutions and in Section~\ref{section:pde} we precisely define the operator $L$.  Finally, in Section~\ref{section:main-thm} we state and prove our main result.  Throughout this paper we are drawing heavily on the groundwork done in~\cite{Tubitak1}, and we refer the reader to this paper for complete details.

\section{Solution spaces for degenerate elliptic PDEs}
\label{section:spaces}
In this section we define the matrix weighted Sobolev space $QH^{1}_0(v,\Omega)$, the solution space for the Dirichlet problem \eqref{eqn:dirichlet}.  For the rest of this section, $v\in L^1(\Omega)$ will denote a non-negative measurable weight function defined on $\Omega$, and $Q:\Omega\rightarrow S_n$ a measurable, non-negative definite, symmetric matrix valued function on $\Omega$.  While me make no assumption on the non-negative lower eigenvalue of $Q$, we do assume an upper ellipticity condition for all $\xi\in\mathbb{R}^n$:
\begin{align}\label{eqn:upperellipticity} \langle Q\xi,\xi\rangle = \left|\sqrt{Q(x)} ~\xi\right|^2 \leq cv(x)|\xi|^2
\end{align}
where $c$ is independent of $x,\xi$.  This condition ensures that the pointwise operator norm of $Q$ is integrable:
\[ \displaystyle \sup_{\xi \in\mathbb{R}^n;~|\xi|=1}\left|\sqrt{Q(x)}\xi\right| \in L^1(\Omega).\]
Associated to $v$ and $Q$ is the subspace of the locally-Lipschitz functions on $\Omega$, 
\[ Q\textrm{Lip}_{loc}(v,\Omega) = \{ \varphi\in Lip_{loc}(\Omega)~:~ \|\varphi\|_{L^2(v,\Omega)}+\|\nabla v\|_{QL^2(\Omega)}<\infty\}\]
where 
\[QL^2(v,\Omega)=\{{\bf g}:\Omega\rightarrow \mathbb{R}^n~:~ \|\nabla\varphi\|_{QL^2(\Omega)}^2=\int_\Omega|\sqrt{Q}~{\bf g}|^2~dx<\infty\}.\]
Since $v\in L^1(\Omega)$ and \eqref{eqn:upperellipticity} holds it is easy to see that the collection $Lip_0(\Omega)$ of Lipschitz functions with compact support in $\Omega$ is a subset of $QLip(v,\Omega)$.  We are now in a position to define the matrix weighted Sobolev space $QH^1_0(v,\Omega)$.
\begin{defn} $QH^1_0(v,\Omega)$ is the completion of $Lip_0(v,\Omega)$ with respect to the norm
\[\| \varphi \|_{QH^1(v,\Omega)} = \|\varphi\|_{L^2(v,\Omega)} + \|\nabla \varphi\|_{QL^2(\Omega)}.\]
\end{defn}
By \cite[Lemma 2.2]{MR3846744}, $QL^2(\Omega)$ is a Banach space with respect to the equivalence ${\bf h} = {\bf g}$ if and only if $\|{\bf h}-{\bf g}\|_{QL^2(\Omega)} = 0$.  This allows for a useful pairwise interpretation of $QH^1_0(v,\Omega)$.  Indeed, given an equivalence class associated to a particular Cauchy sequence of $QLip(v,\Omega)$ functions $\{\varphi_k\}$, it has limit $(u,{\bf g})$ in $L^2(v,\Omega)\oplus QL^2(\Omega)$.  Since this limit is unique to the equivalence class of $\{\varphi_k\}$, $QH^1_0(v,\Omega)$ is isometrically equivalent to a closed subspace of $L^2(v,\Omega)\oplus QL^2(\Omega)$.  Thus, every element of $QH^1_0(v,\Omega)$ is identified as a pair $(u,{\bf g})$ with the property that there is a sequence $\{\varphi_k\}$ of $Lip_0(\Omega)$ functions Cauchy in $\|\cdot \|_{QH^1(v,\Omega)}$ so that
\begin{enumerate}
\item $\varphi_k \rightarrow u$ in $L^2(v,\Omega)$, and
\item $\nabla \varphi_k \rightarrow {\bf g}$ in $QL^2(\Omega)$.
\end{enumerate}
\begin{remark} The function ${\bf g}$ above is thought of as a ``degenerate weak gradient" of $u$ but we note that, due to the possible degeneracy of $Q(x)$, $g$ may not be uniquely determined by $u$.  Nevertheless, if we are given a particular element $(u,{\bf g})$ of $QH^1_0(v,\Omega)$, we will often refer to its derivative as ${\bf g} = \nabla u$ and the pair as $(u,\nabla u)$.
\end{remark}

As a final note in the construction of this space, $QH^1_0(v,\Omega)$ is a Hilbert space with respect to the inner product 
\[\langle(u,{\bf g}),(z,{\bf h})\rangle_Q = \int_\Omega uz~vdx + \int_\Omega \langle Q{\bf h},{\bf g\rangle}~dx   \]
since $\|(u,{\bf g})\|_{QH^1(v,\Omega)}^2 \approx \langle (u,{\bf g}),(u,{\bf g})\rangle_Q$ for any $(u,{\bf g})\in QH^1_0(v,\Omega)$.  \\

In our arguments below, we require a special test function related to a given weak solution of \eqref{eqn:dirichlet}.  To avoid ambiguity we include the following result taken from \cite{MR4280269} and we refer the reader there for its proof.

\begin{lemma}\label{lem:positivepart} Let $v\in L^1(\Omega)$ and $Q$ satisfy \eqref{eqn:upperellipticity}.  If $r>0$ and $(u,{\bf g})\in QH^1_0(v,\Omega)$, then each of \[\left( \left(u-r\right)_+,{\bf g}\chi_{u>r}\right)~\textrm{and }\left( \left(u-r\right)_-,{\bf g}\chi_{u<r}\right)\]
are elements of $QH^1_0(v,\Omega)$.
\end{lemma}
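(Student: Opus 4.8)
The plan is to prove the assertion for $(u-r)_+$ by a two-stage approximation — first approximating $u$ in the ``Lipschitz variable,'' then removing a smoothing parameter — and to obtain the companion statement for $(u-r)_-$ by the symmetric argument. Fix $(u,\mathbf g)\in QH^1_0(v,\Omega)$ and, via the pairwise description of the space, a sequence $\varphi_k\in\operatorname{Lip}_0(\Omega)$ with $\varphi_k\to u$ in $L^2(v,\Omega)$ and $\nabla\varphi_k\to\mathbf g$ in $QL^2(\Omega)$. For $\delta>0$ set $F_\delta(t)=\sqrt{(t-r)_+^2+\delta^2}-\delta$. This is a $C^1$, $1$-Lipschitz function that vanishes on $(-\infty,r]$, whose derivative $F_\delta'(t)=\frac{(t-r)_+}{\sqrt{(t-r)_+^2+\delta^2}}$ is continuous and takes values in $[0,1]$; moreover $|F_\delta(t)-(t-r)_+|\le\delta$ for every $t$, and $F_\delta'(t)\to\chi_{\{t>r\}}$ pointwise as $\delta\to0$.

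The first step is to show that for each fixed $\delta>0$ the pair $(F_\delta(u),F_\delta'(u)\mathbf g)$ lies in $QH^1_0(v,\Omega)$. Because $r>0$ and $\varphi_k$ has compact support, $F_\delta(\varphi_k)$ vanishes outside $\supp\varphi_k$ and hence belongs to $\operatorname{Lip}_0(\Omega)$, with $\nabla F_\delta(\varphi_k)=F_\delta'(\varphi_k)\nabla\varphi_k$ a.e.; this is the only place where $r>0$ enters. Since $F_\delta$ is $1$-Lipschitz, $\|F_\delta(\varphi_k)-F_\delta(u)\|_{L^2(v,\Omega)}\le\|\varphi_k-u\|_{L^2(v,\Omega)}\to0$. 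For the gradients I would write $F_\delta'(\varphi_k)\sqrt Q\,\nabla\varphi_k-F_\delta'(u)\sqrt Q\,\mathbf g=F_\delta'(\varphi_k)(\sqrt Q\,\nabla\varphi_k-\sqrt Q\,\mathbf g)+(F_\delta'(\varphi_k)-F_\delta'(u))\sqrt Q\,\mathbf g$: the first term has $L^2(\Omega)$-norm at most $\|\nabla\varphi_k-\mathbf g\|_{QL^2(\Omega)}\to0$ because $|F_\delta'|\le1$, and the second tends to $0$ by dominated convergence after passing to a subsequence along which $\varphi_k\to u$ $v$-a.e.\ (possible since convergence in $L^2(v,\Omega)$ implies convergence in measure with respect to $v\,dx$), using the continuity of $F_\delta'$, the bound $|F_\delta'(\varphi_k)-F_\delta'(u)|\le2$, and the dominating function $4|\sqrt Q\,\mathbf g|^2\in L^1(\Omega)$. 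Crucially, \eqref{eqn:upperellipticity} forces $Q(x)=0$, hence $\sqrt{Q(x)}\,\mathbf g(x)=0$, wherever $v(x)=0$, so $v$-a.e.\ convergence of $\varphi_k$ suffices; a routine subsequence argument then upgrades the limit to the full sequence. Consequently $F_\delta(\varphi_k)\to F_\delta(u)$ in $L^2(v,\Omega)$ and $\nabla F_\delta(\varphi_k)\to F_\delta'(u)\mathbf g$ in $QL^2(\Omega)$, so $\{F_\delta(\varphi_k)\}$ is Cauchy in $\|\cdot\|_{QH^1(v,\Omega)}$ and its limit, which in $L^2(v,\Omega)\oplus QL^2(\Omega)$ equals $(F_\delta(u),F_\delta'(u)\mathbf g)$, belongs to $QH^1_0(v,\Omega)$ by definition.

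The second step is to let $\delta\to0$. From $|F_\delta(u)-(u-r)_+|\le\delta$ and $v\in L^1(\Omega)$ I get $\|F_\delta(u)-(u-r)_+\|_{L^2(v,\Omega)}\le\delta\,v(\Omega)^{1/2}\to0$, and from $F_\delta'(u)\to\chi_{\{u>r\}}$ pointwise with $|F_\delta'(u)|\le1$ I get, again by dominated convergence with dominating function $4|\sqrt Q\,\mathbf g|^2$, that $\|F_\delta'(u)\mathbf g-\mathbf g\chi_{\{u>r\}}\|_{QL^2(\Omega)}\to0$. Hence $(F_\delta(u),F_\delta'(u)\mathbf g)\to((u-r)_+,\mathbf g\chi_{\{u>r\}})$ in $L^2(v,\Omega)\oplus QL^2(\Omega)$. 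Since each $(F_\delta(u),F_\delta'(u)\mathbf g)$ lies in $QH^1_0(v,\Omega)$, which is isometric to a closed subspace of $L^2(v,\Omega)\oplus QL^2(\Omega)$, the limit lies there too, which is the claim for $(u-r)_+$; the claim for $(u-r)_-$ follows by running the same scheme with $(t-r)_-$ in place of $(t-r)_+$.

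I expect the convergence of the gradients to be the main obstacle. The obvious candidate sequence $(\varphi_k-r)_+\in\operatorname{Lip}_0(\Omega)$ has gradient $\chi_{\{\varphi_k>r\}}\nabla\varphi_k$, and to identify its $QL^2(\Omega)$-limit one must control $\|\chi_{\{\varphi_k>r\}}\mathbf g-\chi_{\{u>r\}}\mathbf g\|_{QL^2(\Omega)}$; this forces one to understand $\mathbf g$ on the level set $\{u=r\}$, on which $\chi_{\{\varphi_k>r\}}$ need not converge even along a subsequence with $\varphi_k\to u$ a.e., so the naive approach would need the auxiliary fact that $\sqrt Q\,\mathbf g=0$ a.e.\ on $\{u=r\}$ — not evident in the degenerate setting. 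Replacing the corner of $t\mapsto(t-r)_+$ by the $C^1$ profile $F_\delta$ circumvents this: $F_\delta'$ is continuous, so $F_\delta'(\varphi_k)\to F_\delta'(u)$ $v$-a.e.\ with no level-set obstruction, and $F_\delta'(u)$ automatically equals $0$ on $\{u=r\}$, which is exactly $\chi_{\{u>r\}}$ there; the error introduced by the smoothing is then controlled in $L^2(v,\Omega)$ by the elementary bound $|F_\delta-(\cdot-r)_+|\le\delta$ and the finiteness of $\int_\Omega v\,dx$.
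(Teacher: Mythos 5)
Your smoothing argument for $\left((u-r)_+,\mathbf g\chi_{\{u>r\}}\right)$ is correct and carefully done. The paper does not prove Lemma~\ref{lem:positivepart} in-house (it is imported from~\cite{MR4280269}), so the test here is simply whether your route works, and it does. The two delicate points are both handled properly: the fact that $r>0$ makes $F_\delta(\varphi_k)\in\lip_0(\Omega)$, and the fact that $v$-a.e.\ convergence of $\varphi_k$ suffices for the dominated-convergence step because \eqref{eqn:upperellipticity} forces $\sqrt{Q(x)}=0$ at Lebesgue-a.e.\ point where $v(x)=0$, so the dominating function $4|\sqrt Q\,\mathbf g|^2$ vanishes wherever the $v$-null exceptional set could live. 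Regularizing the corner of $(t-r)_+$ by a $C^1$ profile is the right move, and your closing remark correctly identifies why the naive choice $(\varphi_k-r)_+$ fails: one cannot control $\sqrt Q\,\mathbf g$ on $\{u=r\}$ in the degenerate setting.

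The gap is the final sentence. You assert the claim for $(u-r)_-$ ``follows by running the same scheme with $(t-r)_-$ in place of $(t-r)_+$,'' but the same scheme does not run. The map $t\mapsto(t-r)_-$ does not vanish at $t=0$ when $r>0$ (it equals $-r$, or $r$ under the other sign convention), so the corresponding smoothing $\tilde F_\delta$ has $\tilde F_\delta(0)\neq0$ and $\tilde F_\delta(\varphi_k)$ is not compactly supported. This breaks precisely the step where you flagged $r>0$ as being used; ``by symmetry'' is not available. In fact, the difficulty is structural: heuristically an element of $QH^1_0(v,\Omega)$ should vanish on $\partial\Omega$, whereas $(u-r)_-\approx -r\neq0$ there, so membership of $(u-r)_-$ in $QH^1_0(v,\Omega)$ is itself dubious as stated. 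The natural mirror statement — to which your scheme would apply verbatim, since the smoothing would then vanish at $0$ — is that $\left((u+r)_-,\mathbf g\chi_{\{u<-r\}}\right)\in QH^1_0(v,\Omega)$. Either the lemma carries a sign slip inherited from the source, or a genuinely different argument is required for the negative-part clause; your proposal should not claim it by symmetry. (Note this does not affect the paper's Theorem~\ref{thm:weak-max}, whose proof uses only the positive-part conclusion.)
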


Key to our proofs is the application of a Sobolev estimate which we include in our hypotheses.  That such estimates hold in general is a difficult question, but there are many examples where they are true allowing for a broad application of our results. The form of these inequalities is dependent on the degeneracy of our matrix $Q$ or how badly its ellipticity fails. In particular, we present a more general notion of gain in the scale of Orlicz spaces.  

\begin{defn}\label{def:sobolev-gain-A} Given a Young function $A(t)$, we say that a degenerate Sobolev inequality with gain $A(t)$ holds if there is a constant $S(A)>0$ so that
\begin{align}\label{eqn:global-sobolev-orlicz}
\|\varphi\|_{L^A(v,\Omega)} \leq 
S(A) \bigg( \int_\Omega |\sqrt{Q}\grad\varphi|^2 \,dx\bigg)^{\frac{1}{2}}
\end{align}
for every $\varphi\in Lip_0(\Omega)$.
\end{defn}
Here, the $L^A(v,\Omega)$ norm is the $v$-weighted Luxembourg norm,
\[\|\varphi\|_{L^A(v,\Omega)} = \inf\left\{ \lambda>0~:~ \int_\Omega A\left(\frac{|\varphi|}{\lambda}\right)~vdx \leq 1\right\}.\]
The form of $A(t)$ depends on the eigenvalues of $Q(x)$.  For example, if $Q(x)$ has lowest eigenvalue that vanishes like a polynomial at a point $x_0\in \Omega$ then $A(t)$ may be chosen as $A(t)=t^{2\sigma}$ for some $\sigma>1$.  In this case, inequality \eqref{eqn:global-sobolev-orlicz} takes the form
\begin{equation} \label{eqn:global-sobolev} 
\bigg( \int_\Omega |\varphi|^{2\sigma } \,\,vdx\bigg)^{\frac{1}{2\sigma }}
\leq
S(\sigma) \bigg( \int_\Omega |\sqrt{Q}\grad\varphi|^2 \,dx\bigg)^{\frac{1}{2}}.
\end{equation}
If the lower eigenvalue vanishes to infinite order, we may expect $A(t) = t^2\log(e+t)^q$ for some $q>0$.  Finally, we say that a degenerate Sobolev inequality without gain holds if \eqref{eqn:global-sobolev-orlicz} is true with $A(t)=t^2$.

\section{Degenerate elliptic equations}
\label{section:pde}

Since this note concerns properties of weak solutions to equations already described in \cite{Tubitak1}, our descriptions here will be brief and the reader is directed to that reference for a fuller discussion. \\

With $Q$ as described in the last section, we begin with first order differential terms.  Fix a measurable ${\bf v}:\Omega\rightarrow \mathbb{R}^n$, ${\bf v} = (v_1,...,v_n)$.  The vector field $V(x) = {\bf v}(x)\cdot \nabla = \sum_{i=1}^n v_i\frac{\partial}{\partial x_i}$ is called a \emph{degenerate subunit vector field} with respect to the weight $v$ and matrix $Q$ if there is a constant $C(V)>0$ so that
\begin{align}\label{eqn:subunit-operator}
\|V\varphi\|_{L^2(v,\Omega)} \leq C(V)\|\varphi\|_{QH^1(v,\Omega)}
\end{align}
for every $\varphi\in Lip_0(\Omega)$.  This condition is easily obtained from a stronger point-wise inequality used in \cite{MR2204824} that requires
\begin{align}\label{eqn:subunit-pointwise}\left({\bf v}(x)\cdot\xi\right)^2 \leq v^{-1}\langle Q(x)\xi,\xi\rangle=v^{-1}|\sqrt{Q(x)}\xi|^2
\end{align}
hold for every $x\in\Omega$. See \cite{Tubitak1} for a wider discussion.  The adjoint of such a vector field will help us to efficiently write our PDE.  Given a subunit vector field $V$ and $\varphi\in Lip_0(\Omega)$, we denote its formal adjoint by 
\[V'\varphi = -\frac{1}{v}\textrm{Div}\left({\bf v}\varphi v\right).\]

Let $N\in\mathbb{N}$ and let ${\bf S}=(S_1,...,S_N), ~{\bf R}=(R_1,...,R_N)$ be $N$-tuples of subunit vector fields $S_j,R_j$ identified with the $\mathbb{R}^n$ valued vector functions ${\bf s}_j, ~{\bf r}_j$.  Fix also measurable functions ${\bf H},~{\bf G}\in L^2(v,\Omega,\mathbb{R}^n)$ and $F\in L^2(v,\Omega)$.  We formally define for $\varphi\in Lip_0(\Omega)$,
\[L\varphi = -\frac{1}{v}\textrm{Div} \left(Q\nabla \varphi\right) + {\bf H}{\bf R}\varphi + {\bf S'G}\varphi + F\varphi\]
where 
\[{\bf HR}\varphi = {\bf H}\cdot (R_1\varphi,...,R_N\varphi)=\sum_{i=1}^N H_iR_i\varphi\]
and
\[{\bf S'G}\varphi = (S_1',...,S_N')\cdot (G_1\varphi,...,G_N\varphi) = \sum_{i=1}^N S_i'(G_i\varphi).\]

As part of the proofs of our main theorems, we require a product rule for $QH^1_0(v,\Omega)$ functions.  The following lemma was proved in \cite{Tubitak1};  we include it here for completeness and refer the reader there for the proof.

\begin{lemma}[Lemma 4.3 of \cite{Tubitak1}] \label{lem:productrule}
    Let $ (u, \nabla u), (w, \nabla w) \in  QH^{1,2}_0(v, \Omega)$ and let $S=\textbf{s}\cdot\nabla$ be a degenerate subunit vector field that satisfies~\eqref{eqn:subunit-operator}. Then each of the following hold:
    \begin{enumerate}
    \item If the global Sobolev inequality \eqref{eqn:global-sobolev} holds with gain $\sigma>1$, then  there exists a unique $L^{\frac{2 \sigma}{\sigma + 1}}(v, \Omega)$ function $S(uw)$ such that $S(uw) = uS(w) + wS(u)$ (with equality in $L^{\frac{2 \sigma}{\sigma + 1}}(v, \Omega)$).
    \item If the global Sobolev inequality \eqref{eqn:global-sobolev-orlicz} holds with gain $A(t)=t^2\log(e+t)^\sigma$, then there exists a unique $L^B(v, \Omega)$ function $S(uw)$ such that $S(uw) = uS(w) + wS(u)$ in $L^B(v, \Omega)$, where  $B(t) = t\log(e+t)^\frac{\sigma}{2}$. 
    \item If the global Sobolev inequality \eqref{eqn:global-sobolev} holds with no gain (i.e., $\sigma=1$ in \eqref{eqn:global-sobolev}), then there exists a unique $L^1(v, \Omega)$ function $S(uw)$ such that $S(uw) = uS(w) + wS(u)$ in $L^1(v, \Omega)$.
    \end{enumerate}
\end{lemma}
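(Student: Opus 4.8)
The plan is to handle all three cases by a single scheme: approximate $u$ and $w$ in the space $QH^1_0(v,\Omega)$ by compactly supported Lipschitz functions, apply the classical Leibniz rule to the products of the approximants, and pass to the limit in the relevant (Orlicz) norm, using the Sobolev inequality to upgrade the convergence of the approximants and a H\"older inequality to control the products.

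First I would record the tools. Because $Lip_0(\Omega)$ is dense in $QH^1_0(v,\Omega)$ by definition and $S$ satisfies \eqref{eqn:subunit-operator}, the map $\varphi\mapsto S\varphi$ extends uniquely to a bounded linear operator $QH^1_0(v,\Omega)\to L^2(v,\Omega)$; this is the meaning of $Su$ and $Sw$, these lie in $L^2(v,\Omega)$, and $S\varphi_k\to Su$ in $L^2(v,\Omega)$ whenever $\varphi_k\to u$ in $QH^1_0(v,\Omega)$. Likewise, applying the Sobolev inequality in force --- \eqref{eqn:global-sobolev} in case (1), \eqref{eqn:global-sobolev-orlicz} with $A(t)=t^2\log(e+t)^\sigma$ in case (2), and \eqref{eqn:global-sobolev-orlicz} with $A(t)=t^2$ in case (3) --- to differences $\varphi_k-\varphi_j\in Lip_0(\Omega)$ shows that a $QH^1$-Cauchy sequence $\{\varphi_k\}$ is Cauchy in $L^A(v,\Omega)$, and by extracting a.e.-convergent subsequences its $L^A(v,\Omega)$ limit is again $u$; in particular $u,w\in L^A(v,\Omega)$. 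Fix sequences $\varphi_k\to u$ and $\psi_k\to w$ of this kind.

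Since each $\varphi_k$ and $\psi_k$ is bounded, compactly supported and Lipschitz on $\Omega$, so is $\varphi_k\psi_k$, i.e. $\varphi_k\psi_k\in Lip_0(\Omega)$; by Rademacher's theorem and the classical product rule,
\[ S(\varphi_k\psi_k) \;=\; \mathbf{s}\cdot\nabla(\varphi_k\psi_k) \;=\; \psi_k\,S\varphi_k + \varphi_k\,S\psi_k \quad\text{a.e. on }\Omega. \]
In case (1), H\"older's inequality with exponents $2\sigma$ and $2$ (note $\tfrac{1}{2\sigma}+\tfrac12=\tfrac{\sigma+1}{2\sigma}$) shows that $uS(w)+wS(u)\in L^{\frac{2\sigma}{\sigma+1}}(v,\Omega)$. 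Splitting
\[ \psi_k\,S\varphi_k - w\,S(u) \;=\; \psi_k\big(S\varphi_k - S(u)\big) + (\psi_k-w)\,S(u), \]
the $L^{\frac{2\sigma}{\sigma+1}}(v,\Omega)$ norm of the first term is at most $\|\psi_k\|_{L^{2\sigma}(v,\Omega)}\|S\varphi_k-S(u)\|_{L^2(v,\Omega)}$ and of the second at most $\|\psi_k-w\|_{L^{2\sigma}(v,\Omega)}\|S(u)\|_{L^2(v,\Omega)}$; both go to $0$ since $\|\psi_k\|_{L^{2\sigma}(v,\Omega)}$ is bounded and $\psi_k\to w$ in $L^{2\sigma}(v,\Omega)$ by the previous paragraph, while $S\varphi_k\to S(u)$ in $L^2(v,\Omega)$. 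The symmetric argument gives $\varphi_k\,S\psi_k\to u\,S(w)$, so $S(\varphi_k\psi_k)\to uS(w)+wS(u)$ in $L^{\frac{2\sigma}{\sigma+1}}(v,\Omega)$. We take this limit as the definition of $S(uw)$; it satisfies the stated identity, is the unique $L^{\frac{2\sigma}{\sigma+1}}(v,\Omega)$ function doing so, and --- since $u$, $w$, $Su$, $Sw$ are intrinsic --- does not depend on the chosen approximating sequences.

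Cases (2) and (3) run on the same template, only the function spaces carrying the products changing. In case (3), Cauchy--Schwarz gives $uS(w)+wS(u)\in L^1(v,\Omega)$ and the limit is taken in $L^1(v,\Omega)$. In case (2) one invokes the generalized H\"older inequality for Orlicz spaces: for $A(t)=t^2\log(e+t)^\sigma$ and the Young function $t^2$, the product of an $L^A(v,\Omega)$ function and an $L^2(v,\Omega)$ function lies in $L^B(v,\Omega)$ with $B(t)=t\log(e+t)^{\sigma/2}$, which amounts to checking the inverse-function relation $A^{-1}(t)\,t^{1/2}\lesssim B^{-1}(t)$. Combined with the $L^A$-convergence $\varphi_k\to u$, $\psi_k\to w$ from the Orlicz Sobolev inequality, the limiting argument goes through verbatim. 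I expect the only real obstacle to be this last point --- pinning down the correct generalized Orlicz--H\"older inequality and verifying the compatibility of $A$ and $B$; the rest is routine bookkeeping once the boundedness of $S$ on $QH^1_0(v,\Omega)$ and the Sobolev upgrade of convergence are in hand.
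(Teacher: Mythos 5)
The paper does not actually prove this lemma; it cites \cite{Tubitak1} and refers the reader there, so there is no in-text argument to compare against. Your proposal is nonetheless a correct proof and is the natural route: extend $S$ by density from $Lip_0(\Omega)$ to a bounded map $QH^1_0(v,\Omega)\to L^2(v,\Omega)$, use the relevant Sobolev inequality on differences $\varphi_k-\varphi_j$ to upgrade $QH^1$-Cauchyness to Cauchyness in $L^A(v,\Omega)$ (so $u,w\in L^A(v,\Omega)$ and $\varphi_k\to u$, $\psi_k\to w$ there), invoke the pointwise Leibniz rule for $\varphi_k\psi_k\in Lip_0(\Omega)$, and pass to the limit via H\"older in the target space. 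The exponent arithmetic is right in case (1) ($\tfrac{1}{2\sigma}+\tfrac12=\tfrac{\sigma+1}{2\sigma}$), Cauchy--Schwarz covers case (3), and the Orlicz--H\"older condition you flag in case (2) does check out: with $A(t)=t^2\log(e+t)^\sigma$ one has $A^{-1}(t)\approx t^{1/2}\log(e+t)^{-\sigma/2}$, so $A^{-1}(t)\cdot t^{1/2}\approx t\log(e+t)^{-\sigma/2}\approx B^{-1}(t)$ for $B(t)=t\log(e+t)^{\sigma/2}$. Two small points worth being explicit about if you write this up: (i) the extended $S$ is a function of the \emph{pair} $(u,\nabla u)$, not of $u$ alone, since \eqref{eqn:subunit-operator} controls $\|V\varphi\|_{L^2(v,\Omega)}$ by the full $QH^1$ norm — your phrasing ``$S u$ and $S w$ are intrinsic'' should be read in that sense; and (ii) the ``uniqueness'' assertion in the lemma is essentially automatic once the limit $S(\varphi_k\psi_k)\to uS(w)+wS(u)$ is established, since the right-hand side is a fixed element of the target space. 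Neither is a gap, just bookkeeping you may want to surface.
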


\begin{defn} A pair $(u,\nabla u)\in QH^1_0(v,\Omega)$ is called a degenerate weak solution (subsolution,supersolution) of the Dirichlet problem 
\begin{equation}\label{eqn:dirichletwith0}
\begin{cases}
Lu = (\leq,\geq )0 & x \in \Omega, \\
 u = 0, & x \in \partial \Omega,
 \end{cases}
 \end{equation}
 if for every $\varphi\in Lip_0(\Omega)$,
 \begin{align}\label{eqn:weaksolution}
 \int_\Omega \nabla\varphi \cdot Q\nabla u~dx + \int_\Omega \varphi{\bf H}\cdot{\bf R}u~vdx+\int_\Omega u{\bf G}\cdot{\bf S}\varphi~vdx + \int_\Omega F\varphi u~vdx = (\leq,\geq) 0.
 \end{align}
 
\end{defn}
\begin{remark} We note that with our  assumptions, each integral in definition \eqref{eqn:weaksolution} is finite.  
\end{remark}
The existence of degenerate weak solutions satisfying \eqref{eqn:weaksolution} is studied in \cite{Tubitak1} and we refer the reader there for a detailed analysis based on the form of the gain in the Sobolev inequality~\eqref{eqn:global-sobolev-orlicz}.

\section{Statement and proof of the weak maximum principle}
\label{section:main-thm}

In this section we state and prove our main result.  First, we summarize our hypotheses.

\begin{hyp} \label{hyp:global-hyp}
    We make the following assumptions:  
    \begin{itemize}
        \item $\Omega \subset \R^n$ is a domain; a bounded, connected, open set. 

        \item  $v$ is a weight with $v\in L^1(\Omega)$, and $Q$ is a matrix function  satisfying \eqref{eqn:upperellipticity}.
        
        \item $F \in L^2(v,\Omega)$, and $\bG,\, \bH \in L^2(v,\Omega,\R^N)$ for some $N\in \N$.

        \item $\bR,\, \bS$ are $N$-tuples of degenerate subunit vector fields that satisfy \eqref{eqn:subunit-operator}.
        %
        %\[ \|V u\|_{L^2(v,\Omega)} \leq C(V)\|u\|_{QH^{1,2}(v,\Omega)}, \qquad V \in \{\bR, \bS \}. \]

        \item $F,\, \bG, \bS$ satisfy the compatibility condition
        \[ \int_\Omega (\bG\cdot\bS \varphi +F \varphi)\,\,vdx\geq 0, \]
        for every non-negative $\varphi \in \lip_0(\Omega)$.  
    \end{itemize}
\end{hyp}
\begin{remark} The compatibility condition is present even in the classical elliptic theory.  See \cite[Chapter 8, (8.8)]{MR1814364}. In \cite[Lemma~4.7]{Tubitak1} it was shown that with our hypotheses, this condition holds with $\varphi=uw\geq 0$, where $u,\,w\in QH_0^{1,2}(v,\Omega)$. 
\end{remark}
\begin{theorem} \label{thm:weak-max}
    Given  Hypothesis~\ref{hyp:global-hyp}, let $u \in QH^{1,2}(v,\Omega)$ satisfy $Lu\leq 0$ in $\Omega$.  Then 
    \begin{equation} \label{eqn:weak-max1}
        \sup_{\Omega} u(x) \leq \sup_{\partial \Omega} u^+(x)
    \end{equation}
 if one of the following is true:
\begin{enumerate}
    \item The degenerate Sobolev inequality~\eqref{eqn:global-sobolev} holds for some $\sigma>1$, and for some $q>2\sigma'$, $\bG,\,\bH \in L^q(v,\Omega,\R^N)$, $F\in L^{\frac{q}{2}}(v,\Omega)$.

     \item The degenerate Sobolev inequality~\eqref{eqn:global-sobolev-orlicz} holds with $A(t)=t^2\log(e+t)^{\sigma}$ for some $\sigma>0$, and  $\bG,\,\bH \in L^C(v,\Omega,\R^N)$ and $F\in L^{D}(v,\Omega)$, where $C(t)=\exp\big(t^{\frac{2}{\lambda\sigma}}\big)-1$ and $D(t)=\exp\big(t^{\frac{1}{\lambda\sigma}}\big)-1$ for some $0<\lambda<1$. 

      \item The degenerate Sobolev inequality~\eqref{eqn:global-sobolev} holds without gain (i.e., $\sigma=1$),  $F\in L^{\infty}(v,\Omega)$,  $\bG,\,\bH \in L^\infty(v,\Omega,\R^N)$, and
\begin{equation} \label{eqn:perturbation}
 \max(C(\bR),C(\bS))S(2,1))\big( \|\bG\|_{L^\infty(v,\Omega)} + \|\bH\|_{L^\infty(v,\Omega)}\big)<1.
\end{equation}

\end{enumerate}
With the same hypotheses, if $Lu \geq 0$, then
    \begin{equation} \label{eqn:weak-max2}
        \inf_{\Omega} u(x) \geq \inf_{\partial \Omega} u^-(x)
    \end{equation}
holds.
\end{theorem}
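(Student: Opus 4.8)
The plan is to establish the subsolution estimate \eqref{eqn:weak-max1}; the supersolution estimate \eqref{eqn:weak-max2} then follows at once on replacing $u$ by $-u$, since $L$ acts linearly in \eqref{eqn:weaksolution} (so $-u$ is a subsolution whenever $u$ is a supersolution), and every assumption in Hypothesis~\ref{hyp:global-hyp} and in (1)--(3), including the compatibility condition, concerns the coefficients alone. So assume $Lu\le 0$, put $l=\sup_{\partial\Omega}u^{+}=\inf\{k\ge 0:(u-k)_{+}\in QH^{1}_0(v,\Omega)\}$ and $M=\esssup_\Omega u$, and argue by contradiction, assuming $M>l$ (the case $M=+\infty$ requires only cosmetic changes, and is in fact ruled out by the estimates below). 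Observe first that $(u-k)_{+}\in QH^1_0(v,\Omega)$ for every $k>l$: choose $l\le k'<k$ with $(u-k')_{+}\in QH^1_0(v,\Omega)$ and apply Lemma~\ref{lem:positivepart} with $r=k-k'$.

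Fix $k\in(l,M)$ and set $w=(u-k)_{+}$. By Lemma~\ref{lem:positivepart}, $(w,\nabla u\,\chi_{\{u>k\}})\in QH^1_0(v,\Omega)$; also $w$ is not the zero element of $L^2(v,\Omega)$ since $k<M$, and, because the degenerate gradient of $w$ vanishes on $\{w=0\}=\{u\le k\}$, $w$ is the $\|\cdot\|_{QH^1(v,\Omega)}$-limit of a sequence of \emph{nonnegative} functions in $\lip_0(\Omega)$. Hence, using the integrability of the coefficients assumed in the relevant case to pass to the limit in each integral, we may use $\varphi=w$ in \eqref{eqn:weaksolution}. On $\supp w=\{u>k\}$ the degenerate gradients of $u$ and of $w$ agree, so there $\bR u=\bR w$, $\bS u=\bS w$, and $u=w+k$; substituting and collecting the terms linear in $k$ gives
\[ \int_\Omega|\sqrt{Q}\,\nabla w|^{2}\,dx+\int_\Omega w\,\bH\cdot\bR w\,v\,dx+\Big(\int_\Omega w\,\bG\cdot\bS w\,v\,dx+\int_\Omega Fw^{2}\,v\,dx\Big)+k\int_\Omega(\bG\cdot\bS w+Fw)\,v\,dx\le 0. \]
The last term is nonnegative by the compatibility condition, extended to the nonnegative element $w\in QH^1_0(v,\Omega)$ by approximation, so it may be discarded. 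Applying the compatibility condition to $w^{2}=w\cdot w$ (as in the remark after Hypothesis~\ref{hyp:global-hyp}) together with the product rule $\bS(w^{2})=2w\,\bS w$ of Lemma~\ref{lem:productrule} gives $\int_\Omega Fw^{2}\,v\,dx\ge-2\int_\Omega w\,\bG\cdot\bS w\,v\,dx$, so the parenthesized term is at least $-\int_\Omega w\,\bG\cdot\bS w\,v\,dx$. This eliminates the zeroth order coefficient $F$, and, writing $X=\|\sqrt{Q}\,\nabla w\|_{QL^2(\Omega)}$, we are left with
\[ X^{2}\le\Big|\int_\Omega w\,\bH\cdot\bR w\,v\,dx\Big|+\Big|\int_\Omega w\,\bG\cdot\bS w\,v\,dx\Big|. \]

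The right-hand side is estimated by applying H\"older to each summand, keeping the subunit derivatives $\bR w,\bS w$ in $L^2(v,\Omega)$ and controlling them via \eqref{eqn:subunit-operator} by $C(\bR)\|w\|_{QH^1(v,\Omega)}\le C(\bR)(\|w\|_{L^2(v,\Omega)}+X)$ (and similarly with $C(\bS)$), and controlling the remaining factor with whichever Sobolev inequality is assumed. In case (1) one uses $\|w\|_{L^{2\sigma}(v,\Omega)}\le S(\sigma)X$ together with H\"older on the finite measure space $(\supp w,v\,dx)$; the assumption $q>2\sigma'$ makes every exponent that appears strictly subcritical, so a strictly positive power of $m:=v(\{u>k\})$ is gained and one obtains $X^{2}\le\Psi(m)X^{2}$ with $\Psi$ nondecreasing and $\Psi(0)=0$. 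In case (2) the same scheme runs in Orlicz spaces: the gain $A(t)=t^2\log(e+t)^{\sigma}$ is paired by the generalized H\"older inequality with the coefficient classes $L^{C}(v,\Omega)$ and $L^{D}(v,\Omega)$, and the slack $0<\lambda<1$ in the exponents of $C$ and $D$ leaves over a Young function that is integrable against $v$ on $\supp w$ with a constant tending to $0$ as $m\to 0$, so once more $X^{2}\le\Psi(m)X^{2}$ with $\Psi(0)=0$. In case (3) there is no gain, the coefficients are bounded, $\|w\|_{L^2(v,\Omega)}\le S(2,1)X$, and \eqref{eqn:perturbation} forces directly that $X^{2}\le\theta X^{2}$ with $\theta<1$ for \emph{every} $k\ge l$; hence $X=0$, so $w=0$ by the no-gain Sobolev inequality, so $u\le k$ a.e.\ for all $k\ge l$, i.e.\ $M\le l$, contradicting $M>l$. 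This settles case (3).

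In cases (1) and (2) choose $m_{0}>0$ with $\Psi(m_{0})<1$; then $X^{2}\le\Psi(m)X^{2}$ forces $X=0$, and hence $w=0$ a.e.\ and $u\le k$ a.e.\ (by the Sobolev inequality), for every $k\ge l$ with $v(\{u>k\})\le m_{0}$. Since $k\mapsto v(\{u>k\})$ is nonincreasing, right continuous and tends to $0$, this already gives $M\le l$ unless $v(\{u=M\})\ge m_{0}>0$, i.e.\ unless the essential supremum is attained on a set of positive weighted measure. To exclude this last possibility, run the last displayed bound (with $X=X_{k}$) once more but estimate crudely, using $|w|\le M-k$ on $\supp w$, $\|w\|_{L^2(v,\Omega)}\le(M-k)v(\Omega)^{1/2}$, and the standing assumption $\bH,\bG\in L^2(v,\Omega,\R^N)$: its right-hand side is then $O(M-k)$, while its left-hand side is $\ge\int_{\{u=M\}}|\sqrt{Q}\,\nabla u|^{2}\,dx$, so letting $k\uparrow M$ yields $\sqrt{Q}\,\nabla u=0$ a.e.\ on $\{u=M\}$. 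Consequently $\supp(\sqrt{Q}\,\nabla w)\subseteq\{k<u<M\}$ up to a null set, and $v(\{k<u<M\})\to 0$ as $k\uparrow M$; repeating the estimate of the previous paragraph with $m$ replaced by $v(\{k<u<M\})$ then forces $X=0$, hence $u\le k<M$ a.e., for $k$ sufficiently close to $M$, the desired contradiction. Thus $M\le l$, proving \eqref{eqn:weak-max1}. The steps needing the most care are the elimination of $F$ (which removes the usual sign hypothesis on the zeroth order term and keeps the argument elementary) and, in case (2), matching the logarithmic Sobolev gain against the exponential classes $L^{C},L^{D}$ so that a factor vanishing with $m$ is genuinely recovered; the measure-theoretic endgame above is the counterpart of the classical ``the gradient vanishes a.e.\ on level sets'' lemma and here must be extracted from the subunit structure and the finiteness of $\int_\Omega v\,dx$.
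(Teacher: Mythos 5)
The proposal follows the paper's strategy at every structural step: use $\varphi=(u-k)^+$ as the test function, exploit Lemma~\ref{lem:positivepart} and the product rule Lemma~\ref{lem:productrule}, invoke the compatibility condition to discard the terms involving $F$, then run H\"older and the Sobolev inequality to compare $\|\sqrt{Q}\nabla\varphi\|_{L^2}^2$ with itself times a constant depending on $v(\supp\varphi)$. The only cosmetic difference on the algebraic side is that you apply the compatibility condition twice, to $w^2$ and to $kw$, whereas the paper applies it once to $u\varphi=w^2+kw$; these give the same inequality $X^2\le\big|\int w\bH\cdot\bR w\,v\,dx\big|+\big|\int w\bG\cdot\bS w\,v\,dx\big|$ as in~\eqref{eq:3.5}. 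Case (3) is handled identically to the paper via~\eqref{eqn:perturbation}.

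Where your writeup genuinely diverges is the final contradiction step in cases (1) and (2). The paper derives $v(\supp\varphi)\ge\epsilon_0>0$ uniformly in $r\in(l,M)$ and then closes by asserting that $\varphi\to0$ $v$-a.e.\ forces $v(\Gamma)\to0$. As you correctly point out, this inference is not automatic: $v(\{u>r\})\downarrow v(\{u=M\})$ as $r\uparrow M$, and nothing said so far rules out $v(\{u=M\})>0$. Identifying that the paper's last line needs the level set $\{u=M\}$ to be $v$-null is a real observation, and the idea of first showing $\sqrt{Q}\nabla u=0$ a.e.\ on $\{u=M\}$ via the $O(M-k)$ bound on $X_k$ is a reasonable one.

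However, the ``measure-theoretic endgame'' you supply does not close the gap. To rerun the H\"older step with $m$ replaced by $v(\{k<u<M\})$, you would need the integrand $|w|\,|\bG|\,|\bS w|\,v$ to be supported (a.e.) in $\{k<u<M\}$; but $|w|=M-k>0$ on $\{u=M\}$, and the vanishing of $\sqrt{Q}\nabla u$ on $\{u=M\}$ does \emph{not} by itself force $\bS w=0$ or $\bR w=0$ there. That implication is valid if the pointwise subunit condition~\eqref{eqn:subunit-pointwise} is assumed, but Hypothesis~\ref{hyp:global-hyp} only requires the weaker operator condition~\eqref{eqn:subunit-operator}, under which $\bS w\in L^2(v,\Omega)$ is produced by an abstract density argument and need not vanish pointwise on the $Q$-degeneracy set of $\nabla w$. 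There is also a secondary issue: from the upper ellipticity~\eqref{eqn:upperellipticity} and the weighted $\esssup$ convention you cannot directly conclude that $\sqrt{Q}\nabla u$ is Lebesgue-a.e.\ zero on $\{u\ge M\}$, only on $\{u=M\}$ modulo a $v$-null set. So your extra paragraph flags a genuine soft spot in the paper's proof, but the repair as written still has a hole: either you must assume~\eqref{eqn:subunit-pointwise} (or some surrogate pointwise control relating $\bS w,\bR w$ to $\sqrt{Q}\nabla w$), or a different argument is needed to exclude $v(\{u=M\})>0$.
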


\begin{proof}
    We will prove~\eqref{eqn:weak-max1}.  The proof of~\eqref{eqn:weak-max2} follows by replacing $u$ by $-u$.    We give a proof by contradiction.  Suppose to the contrary that $\sup_{\Omega} u(x) > \sup_{\partial \Omega} u^+(x)$; then fix any $r>0$ such that
    \[  \sup_{\partial \Omega} u^+(x) <r <  \sup_{\Omega} u(x). \]
    Then by definition, we must have that 
    \[ \varphi = (u-r)^+ = (u^+-r)^+ \in QH_0^{1,2}(v,\Omega). \]
    Therefore, we can use $\varphi$ as a test function in the definition of a subsolution; since $Lu\leq 0$,  we have that 
    \[ \int_\Omega Q\nabla u\cdot \nabla \varphi\, dx + \int_\Omega \bH\cdot\bR u \varphi\, vdx
    + \int_\Omega u\bG \cdot\bS \varphi\, vdx+\int_\Omega Fu\varphi\, vdx \leq 0. \]
In each of cases (1)--(3), using Lemma~\ref{lem:productrule}  we have that $\bG\cdot\bS(u\varphi)=u\bG\cdot\bS \varphi + \varphi\bG\cdot\bS u$.  Moreover, if we let $\Gamma=\supp(\varphi)$, then $\supp(\grad \varphi)\subset \Gamma$ and $\grad u = \grad \varphi$ on $\Gamma$. Thus,
\begin{align*} 
\int_\Gamma Q\nabla \varphi \cdot \nabla \varphi\, dx
& = \int_\Omega Q\nabla u \cdot \nabla \varphi\, dx \\
& \leq \int_\Omega \varphi \big(\bG\cdot\bS u -\bH\cdot\bR u\big)\, vdx
- \int_\Omega \big(\bG\cdot\bS (u\varphi) +Fu\varphi\big)\, vdx.   \\
\intertext{By the compatibility condition in our hypotheses,  again in every case we have that the last integral is nonnegative, so}
& \leq \int_\Omega \varphi \big(\bG\cdot\bS u -\bH\cdot\bR u\big)\, vdx. 
\end{align*}
 Therefore, by H\"older's inequality and using that our subunit vector fields satisfy~\eqref{eqn:subunit-operator},
\begin{align}
 \qquad   & \int_\Gamma Q\nabla \varphi \cdot \nabla \varphi\, dx
    \leq\int_\Gamma \varphi(\bG\cdot\bS-\bH\cdot\bR) \varphi\, vdx \notag\\
    &  \qquad \leq\int_\Gamma|\varphi|\big(|\bG||\bS \varphi|+|\bH||\bR \varphi|\big)\,vdx; \notag \\
    &  \qquad\leq  \|\varphi|\bG|\|_{L^2(v,\Gamma)}\|\bS \varphi\|_{L^2(v,\Gamma)} + \|\varphi|\bH|\|_{L^2(v,\Gamma)}\|\bR \varphi\|_{L^2(v,\Gamma)} \notag \\
    &  \qquad\leq  \bigg(C(\bS)\|\varphi|\bG|\|_{L^2(v,\Gamma)}+C(\bR)\|\varphi|\bH|\|_{L^2(v,\Gamma)} \bigg)\|\varphi\|_{QH^{1,2}(v,\Gamma)}; \notag
    \intertext{by the Sobolev inequality with no gain (which holds in every case),}
    &  \qquad\leq C\big( \bR, \bS, S(2,1)\big)\bigg(\|\varphi|\bG|\|_{L^2(v,\Gamma)}+\|\varphi|\bH|\|_{L^2(v,\Gamma)}\bigg)\|\nabla \varphi\|_{QL^2(v,\Gamma)}.\label{eq:3.5}
    \end{align}

The remainder of the proof consists of three cases, corresponding to assumptions (1)--(3).  Each case is very similar to the proof of the corresponding case in~\cite[Theorem~5.1]{Tubitak1}, so we will only give the first case and refer the reader to this paper for additional details. 
To estimate the first term on the righthand side of~\eqref{eq:3.5}.  By H\"older's inequality applied twice with exponent $\frac{q}{2}$ and then $p=\frac{\sigma(q-2)}{q}>1$, 
\begin{align*}
    \bigg(\|\varphi|\bG|\|_{L^2(v,\Gamma)}+\|\varphi|\bH|\|_{L^2(v,\Gamma)}\bigg)
    &\leq(\|\bG\|_{L^q(v,\Gamma)}+\|\bH\|_{L^q(v,\Gamma)}) \|\varphi\|_{L^{\frac{2q}{q-2}}(v,\Gamma)}\\
    &=C(\bG,\bH)\|\varphi\|_{L^{\frac{2q}{q-2}}(v,\Gamma)}. \\
    &\leq C(\bG,\bH)v\left(\Gamma\right)^{\frac{1}{2\sigma'}-\frac{1}{q}} \|\varphi\|_{L^{2\sigma}(v,\Gamma)}. \\
   &\leq C(\bG,\bH)S(2,\sigma)v\left(\Gamma\right)^{\frac{1}{2\sigma'}-\frac{1}{q}} \|\nabla \varphi\|_{QL^2(\Gamma)};
\end{align*}
The final inequality follows from  Sobolev inequality \eqref{eqn:global-sobolev}.
Given this, \eqref{eq:3.5} becomes
\begin{equation*}
    \int_\Gamma |\sqrt{Q}\nabla \varphi|^2~dx\leq C v\left(\Gamma\right)^{\frac{1}{2\sigma'}-\frac{1}{q}}\|\nabla \varphi\|^2_{QL^2(\Gamma)},
\end{equation*}
where the constant $C$ is independent of $r$.  Therefore, rearranging terms, since 
 $\frac{1}{2\sigma'}-\frac{1}{q}>0$, we get
     \begin{equation*}
         v\left(\Gamma\right)\geq C^{-\frac{2q\sigma'}{q-2\sigma'}}>0. 
         \end{equation*}
In other words, $\supp(\varphi)$ 
has positive $v$-measure independent of $r$.  But,  since $u\in L^p(v,\Omega)$, we must have that as  $r\to \sup_\Omega u$, $\varphi\rightarrow 0$ $v$-a.e. in $\Omega$, and so $v(\Gamma)\to 0$.  From this contradiction we see that \eqref{eqn:weak-max1} must hold. 
\end{proof}

\section*{Conflict of interest}

On behalf of all authors, the corresponding author states that there is no conflict of interest.

\section*{Data availability}

This project has no associated data.

\bibliographystyle{plain}
\bibliography{weak-max}
\end{document}